\title[Area-charge inequality and local rigidity]{Area-charge inequality and local rigidity in charged initial data sets}
\author{Abraão Mendes}
\address{Instituto de Matemática, Universidade Federal de Alagoas, Maceió, AL, Brazil.}
\email{abraao.mendes@im.ufal.br}
\numberwithin{equation}{section}
\newtheorem{theorem}{Theorem}[section]
\newtheorem{proposition}[theorem]{Proposition}
\renewcommand{\div}{\operatorname{div}}
\newcommand{\Ric}{\operatorname{Ric}}
\newcommand{\tr}{\operatorname{tr}}
\newcommand{\p}{\partial}
\renewcommand{\L}{\mathcal{L}}
\newcommand{\A}{\mathcal{A}}
\newcommand{\Q}{\mathcal{Q}}
\begin{document}

\raggedbottom

\begin{abstract}
This paper investigates the geometric consequences of equality in area-charge inequalities for spherical minimal surfaces and, more generally, for marginally outer trapped surfaces (MOTS), within the framework of the Einstein-Maxwell equations. We show that, under appropriate energy and curvature conditions, saturation of the inequality $\A \geq 4\pi(\Q_{\rm E}^2 + \Q_{\rm M}^2)$ imposes a rigid geometric structure in a neighborhood of the surface. In particular, the electric and magnetic fields must be normal to the foliation, and the local geometry is isometric to a Riemannian product. We establish two main rigidity theorems: one in the time-symmetric case and another for initial data sets that are not necessarily time-symmetric. In both cases, equality in the area-charge bound leads to a precise characterization of the intrinsic and extrinsic geometry of the initial data near the critical surface.
\end{abstract}

\maketitle

\section{Introduction}

In his influential 1999 paper, G.~W.~Gibbons~\cite{Gibbons} explores the profound interplay between geometry and gravitation, with particular emphasis on the role of inverse mean curvature flow (IMCF) in the understanding of gravitational entropy. Among the key results discussed is the derivation of an \textit{area-charge inequality}, which asserts that, under natural energy conditions, the area $\A$ of a closed, stable minimal surface enclosing an electric or magnetic charge $\Q$ in a time-symmetric initial data set must satisfy
\begin{align}\label{eq:Gibbons.area-charge}
\A \geq 4\pi \Q^2.
\end{align}
As noted in~\cite{Gibbons}, this inequality also extends to maximal initial data sets that are not necessarily time-symmetric.

Inequality \eqref{eq:Gibbons.area-charge} expresses a fundamental geometric constraint imposed by general relativity: the area of a black hole horizon cannot be arbitrarily small for a given charge. In other words, if a black hole  were to have charge $\Q$ but an area smaller than $4\pi \Q^2$, it would contradict physical expectations.

More recently, S.~Dain, J.~L.~Jaramillo, and M.~Reiris~\cite{DainJaramilloReiris} extended inequality~\eqref{eq:Gibbons.area-charge} to the setting of dynamical black holes without making any symmetry assumptions. They showed that, if $\Sigma$ is an orientable, closed, marginally outer trapped surface satisfying the \emph{spacetime stably outermost condition},\footnote{See Definition~3.2 in~\cite{DainJaramilloReiris}.} in a spacetime that obeys the Einstein equations
\begin{align*}
G + \Lambda h = 8\pi\left(T^{\rm EM} + T^{\rm matter}\right),
\end{align*}
with a non-negative cosmological constant $\Lambda$, and where the non-electromagnetic matter field $T^{\rm matter}$ satisfies the dominant energy condition, then the following area-charge inequality holds:
\begin{align}\label{eq:Dain-Jaramillo-Reiris.area-charge}
\A \geq 4\pi (\Q_{\rm E}^2 + \Q_{\rm M}^2),
\end{align}
where $\A$, $\Q_{\rm E}$, and $\Q_{\rm M}$ denote the area, electric charge, and magnetic charge of $\Sigma$, respectively. Notably, no assumption is made that the matter fields are electrically neutral.

The aim of this paper is to investigate the geometric consequences of equality in~\eqref{eq:Gibbons.area-charge} or~\eqref{eq:Dain-Jaramillo-Reiris.area-charge}, formulated in terms of initial data. More precisely, we show that, under suitable conditions, equality in either~\eqref{eq:Gibbons.area-charge} or~\eqref{eq:Dain-Jaramillo-Reiris.area-charge} implies that the initial data set containing $\Sigma$ exhibits a specific, expected geometric structure in a vicinity of $\Sigma$.

Our first result is the following (see Section~\ref{section:preliminaries} for definitions):

\begin{theorem}\label{thm:main1}
Let $(M^3, g)$ be a Riemannian three-manifold with scalar curvature $R$ satisfying
\begin{align}\label{eq:CDEC.time-symmetric}
\frac{1}{2}R \geq \Lambda + |E|^2 + |B|^2,
\end{align}
where $\Lambda$ is a non-negative constant representing the cosmological constant, and $E$ and $B$ are divergence-free vector fields on $M$ representing the electric and magnetic fields, respectively. 

If $\Sigma$ is an area-minimizing two-sphere embedded in $(M, g)$, then the area, electric charge, and magnetic charge of $\Sigma$ satisfy
\begin{align*}
\A \geq 4\pi (\Q_{\rm E}^2 + \Q_{\rm M}^2).
\end{align*}
Moreover, if equality holds, then there exists a neighborhood $U \cong (-\delta, \delta) \times \Sigma$ of $\Sigma$ in $M$ such that:
\begin{enumerate}
\item The electric and magnetic fields are normal to the foliation; more precisely,
      \begin{align*}
      E = a \nu_t, \quad B = b \nu_t,
      \end{align*}
      for some constants $a$ and $b$, where $\nu_t$ is the unit normal to $\Sigma_t \cong \{t\} \times \Sigma$ along the foliation.
\item $(U, g)$ is isometric to $((-\delta, \delta) \times \Sigma, dt^2 + g_0)$ for some $\delta > 0$, where the induced metric $g_0$ on $\Sigma$ has constant Gaussian curvature
      \begin{align*}
      \kappa_\Sigma = a^2 + b^2.
      \end{align*}
\item The cosmological constant $\Lambda$ equals zero.
\end{enumerate}
\end{theorem}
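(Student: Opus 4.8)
The plan is to run the classical stable-minimal-surface rigidity scheme (in the spirit of Schoen--Yau, Bray--Brendle--Neves, and Galloway), adapted to the charged setting. Throughout I write $\nu$ for the unit normal, $h$ for the second fundamental form, $H$ for the mean curvature, and $K_\Sigma$ for the Gaussian curvature, and I use the flux definitions $4\pi\,\Q_{\rm E}=\int_\Sigma\langle E,\nu\rangle\,dA$ and $4\pi\,\Q_{\rm M}=\int_\Sigma\langle B,\nu\rangle\,dA$.

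For the inequality, since $\Sigma$ is area-minimizing it is in particular a stable, two-sided minimal sphere, so the second variation of area applied to the constant test function $\phi\equiv1$ gives $\int_\Sigma\bigl(\Ric(\nu,\nu)+|h|^2\bigr)\,dA\le 0$. The contracted Gauss equation for the minimal surface reads $\Ric(\nu,\nu)+|h|^2=\tfrac12 R-K_\Sigma+\tfrac12|h|^2$, and inserting this together with Gauss--Bonnet $\int_\Sigma K_\Sigma\,dA=4\pi$ yields
\[
4\pi \;\ge\; \int_\Sigma\Bigl(\tfrac12 R+\tfrac12|h|^2\Bigr)\,dA .
\]
I would then feed in the energy condition \eqref{eq:CDEC.time-symmetric} and estimate the charges by Cauchy--Schwarz, $16\pi^2\Q_{\rm E}^2\le\A\int_\Sigma|E|^2\,dA$ and $16\pi^2\Q_{\rm M}^2\le\A\int_\Sigma|B|^2\,dA$, to obtain
\[
4\pi \;\ge\; \Lambda\A+\frac{16\pi^2(\Q_{\rm E}^2+\Q_{\rm M}^2)}{\A}+\tfrac12\int_\Sigma|h|^2\,dA \;\ge\; \frac{16\pi^2(\Q_{\rm E}^2+\Q_{\rm M}^2)}{\A},
\]
which is precisely $\A\ge 4\pi(\Q_{\rm E}^2+\Q_{\rm M}^2)$.

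Tracing back the equality case, every inequality above must be saturated. Thus $\Lambda\A=0$ forces $\Lambda=0$ (item 3); $\int_\Sigma|h|^2=0$ forces $h\equiv0$, so $\Sigma$ is totally geodesic; the energy condition holds with equality pointwise; and the two Cauchy--Schwarz steps force $E,B$ to be normal with $\langle E,\nu\rangle,\langle B,\nu\rangle$ constant, i.e. $E=a\nu$, $B=b\nu$ with $a,b$ constant (item 1). Moreover the constant then realizes the value $0$ for the Rayleigh quotient of the Jacobi operator $L=-\Delta_\Sigma-(\Ric(\nu,\nu)+|h|^2)$, while stability gives $\lambda_1(L)\ge0$; hence $\lambda_1(L)=0$ and $L1=0$, so $\Ric(\nu,\nu)+|h|^2\equiv0$ pointwise. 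With $h\equiv0$ and $\tfrac12 R=a^2+b^2$ this gives $\Ric(\nu,\nu)\equiv0$ and, through Gauss again, $K_\Sigma\equiv\tfrac12 R=a^2+b^2$, the constant Gaussian curvature of item 2.

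To upgrade this infinitesimal rigidity to a neighborhood I would construct, via the implicit function theorem, a foliation $\{\Sigma_t\}_{|t|<\delta}$ of a neighborhood of $\Sigma$ by surfaces of constant mean curvature $H(t)$ with $\Sigma_0=\Sigma$; this is available precisely because $0$ is a simple first eigenvalue of $L$ with constant eigenfunction. Writing the flow as $\p_t=\rho_t\nu_t$ with positive lapse $\rho_t$, and using that $E,B$ are divergence-free so that the fluxes, hence $\Q_{\rm E}$ and $\Q_{\rm M}$, are constant along the foliation, I would study $A(t)=\A(\Sigma_t)$ and $H(t)$. The area-minimizing hypothesis gives $A(t)\ge A(0)$; integrating the evolution equation $\p_t H=-\Delta_{\Sigma_t}\rho_t-(|h_t|^2+\Ric(\nu_t,\nu_t))\rho_t$ and combining the Gauss equation, Gauss--Bonnet, and the saturated energy condition should give the reverse estimate $A(t)\le A(0)$. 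Hence $A(t)\equiv A(0)$ and $H(t)\equiv0$: every leaf is minimal, totally geodesic, with $\Ric(\nu_t,\nu_t)\equiv0$ and $K_{\Sigma_t}\equiv a^2+b^2$. This forces $\rho_t$ to be spatially constant, the metric to take the product form $dt^2+g_0$, and $E=a\nu_t$, $B=b\nu_t$ to persist, completing items 1--2.

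The delicate step is the reverse area estimate $A(t)\le A(0)$. The difficulty is that the leaves $\Sigma_t$ are only constant mean curvature, not minimal or stable, and the integral identity for $H'(t)$ carries the lapse $\rho_t$ as a weight in front of the Gauss--Bonnet term $\int_{\Sigma_t}K_{\Sigma_t}\,dA_t$, which cannot be pulled out unless $\rho_t$ is already known to be constant. I expect overcoming this to require a careful normalization of the foliation together with a maximum-principle and continuity argument controlling $\rho_t$ near the constant function, as in the Bray--Brendle--Neves rigidity scheme; this is where the bulk of the technical work will lie.
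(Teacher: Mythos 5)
Your derivation of the inequality and of the infinitesimal rigidity on $\Sigma$ itself is correct and coincides with the paper's (Proposition~\ref{proposition:infinitesimal.Riemannian} there), and your plan for the neighborhood statement --- CMC foliation from the degenerate Jacobi operator, charge conservation from $\div E=\div B=0$, and the goal $\A(t)\equiv\A(0)$, $H(t)\equiv 0$ --- is also the paper's strategy. But the one step you explicitly leave open, the estimate $\A(t)\le\A(0)$, is exactly the crux of the theorem, so as written the proof is incomplete. Moreover your diagnosis of the obstacle is off: the lapse does not end up weighting the Gauss--Bonnet term, because one divides the evolution equation $H'=-\Delta\phi-\frac12(R-2\kappa_{\Sigma_t}+|A_t|^2+H^2)\phi$ by $\phi$ before integrating, so that $-\int_{\Sigma_t}\Delta\phi/\phi=-\int_{\Sigma_t}|\nabla\phi|^2/\phi^2\le0$ and $\int_{\Sigma_t}\kappa_{\Sigma_t}=4\pi$ appears cleanly. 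The genuine difficulty is that the resulting bound is
\begin{align*}
H'(t)\int_{\Sigma_t}\frac{1}{\phi}\;\le\;4\pi\Big(1-\frac{4\pi\Q_{\rm T}(t)^2}{\A(t)}\Big)=\frac{4\pi}{\A(t)}\big(\A(t)-\A(0)\big),
\end{align*}
whose right-hand side is \emph{nonnegative} by the area-minimizing hypothesis, so no sign for $H'$ follows directly, and a maximum-principle or continuity argument keeping $\rho_t$ near a constant is not the mechanism that produces one.

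The paper closes this loop differently: by the first variation of area and the fact that $H^{\Sigma_s}=H(s)$ is constant on each leaf, $\A(t)-\A(0)=\int_0^t H(s)\big(\int_{\Sigma_s}\phi\big)\,ds$, which turns the display above into the integro-differential inequality $H'(t)\,\eta(t)\le\int_0^t H(s)\,\xi(s)\,ds$ with positive weights $\eta,\xi$ and initial condition $H(0)=0$. A Gronwall-type comparison (Lemma~3.2 of \cite{Mendes2019}) then yields $H(t)\le0$ for $t\in[0,\delta)$ and, running the same argument backwards, $H(t)\ge0$ for $t\in(-\delta,0]$; hence $\A'(t)\le0$ for $t\ge0$ and $\A'(t)\ge0$ for $t\le0$, so $\A(t)\le\A(0)$ on both sides, which together with area-minimization forces $\A(t)\equiv\A(0)$ and $H\equiv0$. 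After that, all of your intermediate inequalities saturate leaf by leaf and the splitting follows as you describe. You need to supply this (or an equivalent) ODE comparison step to complete the proof.
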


Our second result is a generalization of Theorem~\ref{thm:main1} to initial data sets that are not necessarily time-symmetric. It reads as follows (see Section~\ref{section:preliminaries} for definitions):

\begin{theorem}\label{thm:main2}
Let $(M^3, g, K, E, B)$ be a three-dimensional initial data set for the Einstein-Maxwell equations satisfying the charged dominant energy condition
\begin{align}\label{eq:CDEC}
\mu + J(v) \ge \Lambda + |E|^2 + |B|^2 - 2\langle E \times B, v \rangle
\end{align}
for every unit vector $v \in T_pM$, every point $p \in M$, and some constant $\Lambda \ge 0$. Assume that $E$ and $B$ are divergence-free and that $K$ is two-convex.

Let $\Sigma$ be a weakly outermost, spherical MOTS in $(M, g, K)$. Then the area, electric charge, and magnetic charge of $\Sigma$ satisfy
\begin{align*}
\A \ge 4\pi(\Q_{\rm E}^2 + \Q_{\rm M}^2).
\end{align*}
Moreover, if equality holds, then there exists an outer neighborhood $U \cong [0,\delta) \times \Sigma$ of $\Sigma$ in $M$ such that:
\begin{enumerate}
\item The electric and magnetic fields are normal to the foliation; more precisely,
\begin{align*}
E = a\nu_t, \quad B = b\nu_t,
\end{align*}
for some constants $a$ and $b$, where $\nu_t$ is the unit normal to $\Sigma_t \cong \{t\} \times \Sigma$ along the foliation.
\item $(U, g)$ is isometric to $([0,\delta) \times \Sigma, dt^2 + g_0)$ for some $\delta > 0$, where the induced metric $g_0$ on $\Sigma$ has constant Gaussian curvature
\begin{align*}
\kappa_\Sigma = a^2 + b^2.
\end{align*}
\item The second fundamental form satisfies $K = f dt^2$ on $U$, where $f \in C^\infty(U)$ depends only on $t \in [0,\delta)$.
\item The energy and momentum densities satisfy
\begin{align*}
\mu = a^2 + b^2, \quad J = 0 \quad \text{on} \quad U.
\end{align*}
\item The cosmological constant $\Lambda$ equals zero.
\end{enumerate}
\end{theorem}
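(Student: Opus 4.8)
The plan is to run a marginally-outer-trapped-surface (MOTS) stability argument on $\Sigma$ to extract the inequality, and then to promote the equality case to a full outer neighborhood by foliating with MOTS. Since $\Sigma$ is weakly outermost, its principal eigenvalue $\lambda_1(L)$ is nonnegative, where $L\phi = -\Delta\phi + 2\langle V,\nabla\phi\rangle + (\mathcal S + \div V - |V|^2)\phi$, with $V$ the tangential part of $K(\nu,\cdot)$ and $\mathcal S = \tfrac12 R_\Sigma - \mu - J(\nu) - \tfrac12|\chi^+|^2$. The Galloway--Schoen symmetrization (completing the square in $V$ against $\nabla\log\phi$ for the positive principal eigenfunction $\phi$) gives $\int_\Sigma(|\nabla u|^2 + \mathcal S u^2)\ge 0$ for every $u$. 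Testing with $u\equiv 1$ and using Gauss--Bonnet, $\int_\Sigma \tfrac12 R_\Sigma = 4\pi$ since $\Sigma\cong S^2$, I obtain $4\pi \ge \int_\Sigma (\mu + J(\nu)) + \tfrac12\int_\Sigma|\chi^+|^2$. I then insert the charged dominant energy condition \eqref{eq:CDEC} with $v=\nu$ together with the pointwise identity
\[
|E|^2 + |B|^2 - 2\langle E\times B,\nu\rangle = \langle E,\nu\rangle^2 + \langle B,\nu\rangle^2 + |E^\top + \nu\times B^\top|^2,
\]
and finally Cauchy--Schwarz, $\int_\Sigma\langle E,\nu\rangle^2 \ge (4\pi\Q_{\rm E})^2/\A$ and likewise for $B$, to reach $\A \ge 4\pi(\Q_{\rm E}^2 + \Q_{\rm M}^2)$.

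For the rigidity I trace back each inequality in the equality case. The term $\Lambda\A$ must vanish, so $\Lambda = 0$; $\int_\Sigma|\chi^+|^2 = 0$ gives $\chi^+\equiv 0$; then $u\equiv 1$ is the principal eigenfunction and $\mathcal S\equiv 0$ pointwise; Cauchy--Schwarz equality forces $\langle E,\nu\rangle\equiv a$ and $\langle B,\nu\rangle\equiv b$ constant; the sum-of-squares identity forces $E^\top = -\nu\times B^\top$; and $\chi^+=0$ with $\mathcal S=0$ gives $\kappa_\Sigma = \mu + J(\nu) = a^2+b^2$. I would also record that, because the defect $v\mapsto \mu + J(v) - \Lambda - (|E|^2+|B|^2 - 2\langle E\times B,v\rangle)$ is affine in $v$, nonnegative on the unit sphere, and zero at $v=\nu$, the vector $W := J + 2E\times B$ must be parallel to $\nu$.

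Next I propagate this rigidity outward. Using $\lambda_1(L)=0$ and its positive eigenfunction, I invoke Galloway's construction to produce an outer foliation $\{\Sigma_t\}_{t\in[0,\delta)}$ with $\theta^+(\Sigma_t)$ constant on each leaf; the weakly outermost hypothesis and a maximum-principle argument then force $\theta^+(\Sigma_t)\equiv 0$, so every leaf is a MOTS with $\lambda_1(L_t)=0$. The inequality above applies to each leaf with the same (flux-conserved) charges, while two-convexity of $K$ gives $\tr_{\Sigma_t}K\ge 0$, hence $H_t = -\tr_{\Sigma_t}K\le 0$ and $|\Sigma_t|$ nonincreasing in $t$; combined with $|\Sigma_t|\ge\A$ this forces $|\Sigma_t|\equiv\A$. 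Thus each leaf saturates and inherits all the infinitesimal rigidity of $\Sigma$, and in addition $H_t\equiv 0$, $\tr_{\Sigma_t}K\equiv 0$. Two-convexity together with $\tr_{\Sigma_t}K=0$ forces $\nu_t$ to be a $K$-eigenvector, so the mixed components of $K$ vanish and $K_t := K|_{\Sigma_t}$ is trace-free, while $\chi^+=0$ gives $A_t = -K_t$. I expect this propagation step to be the main obstacle: making the foliation and the maximum principle rigorous for the non-self-adjoint $L_t$ is delicate, and it is exactly here that the weakly outermost condition and two-convexity are both indispensable.

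The constraints then close the argument. With $H_t\equiv 0$ the Hamiltonian constraint gives $\mu = \tfrac12 R + \tfrac12((\tr K)^2 - |K|^2) = a^2+b^2$, independently of $K_t$; comparing with $\mathcal S_t = 0$, that is $\mu + J(\nu_t) = \kappa_{\Sigma_t} = a^2+b^2$, yields $J(\nu_t)=0$. A direct evaluation of the momentum constraint in the adapted coordinates gives $J(\nu_t) = |K_t|^2$ (up to sign), whence $K_t\equiv 0$; therefore $A_t\equiv 0$, the leaves are totally geodesic, and the neighborhood is the product $dt^2 + g_0$ with $\kappa_\Sigma = a^2+b^2$ and $K = f\,dt^2$. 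Re-evaluating the constraints then gives $\mu = a^2+b^2$ and $J\equiv 0$ throughout. Finally $J\equiv 0$ reduces $W\parallel\nu_t$ to $(E\times B)^\top = 0$; combined with $E^\top = -\nu_t\times B^\top$ and the fact that equality forces $a^2+b^2 = 4\pi/\A > 0$, elementary vector algebra gives $E^\top = B^\top = 0$, so $E = a\nu_t$ and $B = b\nu_t$. This establishes all five conclusions.
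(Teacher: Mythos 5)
Your derivation of the inequality and of the infinitesimal rigidity on $\Sigma$ matches the paper's Proposition~\ref{proposition:infinitesimal} (your exact identity $|E|^2+|B|^2-2\langle E\times B,\nu\rangle=\langle E,\nu\rangle^2+\langle B,\nu\rangle^2+|E^\top+\nu\times B^\top|^2$ is a clean sharpening of the paper's estimate via $(|E^\top|-|B^\top|)^2$, and the observation that $J+2E\times B$ is parallel to $\nu$ is correct, if unnecessary). The propagation and the endgame, however, contain genuine gaps.

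First, the step ``weakly outermost plus a maximum-principle argument forces $\theta^+(\Sigma_t)\equiv 0$'' misidentifies the mechanism and, as ordered, is circular. Weak outermostness only rules out $\theta(t)<0$; the substantive direction is $\theta(t)\le 0$. In this charged setting $\int_{\Sigma_t}Q$ is only bounded by $4\pi\left(1-\A(0)/\A(t)\right)$, which has no sign until one knows $\A(t)\le\A(0)$; but your route to area monotonicity ($H_t=-\tr_{\Sigma_t}K\le 0$) presupposes $\theta(t)=0$. The quantities $\theta(t)$ and $\A(t)-\A(0)=\int_0^t\int_{\Sigma_s}H\phi$ are coupled through two-convexity ($H\le\theta$), and the paper resolves this with an integro-differential comparison (Lemma~3.2 of \cite{Mendes2019}) applied to $\theta'\eta-\theta\zeta\le\int_0^t\theta\xi$. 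Without that, or an equivalent Gronwall-type argument for the coupled system, $\theta\equiv 0$ is not established.

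Second, the endgame has a broken link and a missing conclusion. The claim that the Hamiltonian constraint gives $\mu=a^2+b^2$ ``independently of $K_t$'' is false: with $H_t=0$, vanishing mixed components, and $K_t$ trace-free, one gets $\mu=\tfrac12 R-\tfrac12|K_t|^2$, and $R$ is not independently determined at that stage. Consequently $J(\nu_t)=0$, hence $K_t=0$, hence $J\equiv0$, hence $E^\top=B^\top=0$ all collapse (your computation $J(\nu_t)=|K_t|^2$ given $H_t=0$ and $A_t=-K_t$ is correct, but you cannot feed it $J(\nu_t)=0$). You also never show that the lapse $\phi$ is constant on each leaf, without which $g=\phi^2dt^2+g_0$ is not the product metric of conclusion~(2). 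The idea you are missing is the paper's variation of $\theta^-$: since $\tr_{\Sigma_t}K=H_t=0$ gives $\theta^-\equiv0$, differentiating $\theta^-$ in the direction $-\phi\nu_t$ and running the charged dominant energy condition with $v=-\nu_t$ produces a second saturated chain yielding $\chi^-_t=0$ (hence $K|_{\Sigma_t}=A_t=0$), $\nabla\ln\phi=0$, and $\mu-J(\nu_t)=\kappa_{\Sigma_t}$; combined with the first chain this gives $\langle E\times B,\nu_t\rangle=0$ and $|E|^2+|B|^2=\langle E,\nu_t\rangle^2+\langle B,\nu_t\rangle^2$, i.e.\ $E=a\nu_t$ and $B=b\nu_t$ directly. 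Only then does $E\times B=0$ together with the energy condition give $J(v)\ge0$ for all unit $v$, whence $J=0$ --- the reverse of your intended order.
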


In Section~\ref{section:preliminaries}, we derive inequalities \eqref{eq:CDEC.time-symmetric} and \eqref{eq:CDEC} from the dominant energy condition for the energy-momentum tensor $T^{\rm matter}$.

The paper is organized as follows: In Section~\ref{section:preliminaries}, we present some preliminaries necessary for a proper understanding of this work. In Section~\ref{section:proofs}, we provide the proofs of Theorems~\ref{thm:main1} and~\ref{thm:main2}. Finally, Section~\ref{section:model} offers a model illustrating these results.

\section{Preliminaries}\label{section:preliminaries}

Let $(M^3, g, K)$ be a three-dimensional initial data set in a four-dimensional spacetime $(V^4, h)$; that is, $M$ is a spacelike hypersurface in $(V, h)$ with induced metric~$g$ and second fundamental form $K$, taken with respect to the future-directed timelike unit normal to $M$. Assume that $(V, h)$ satisfies the Einstein equations with cosmological constant $\Lambda$:
\begin{align*}
G + \Lambda h = 8\pi\left(T^{\rm EM} + T^{\rm matter}\right),
\end{align*}
where $G = \Ric_h - \frac{1}{2} R_h h$ is the Einstein tensor of $(V,h)$, $T^{\rm EM}$ is the electromagnetic energy-momentum tensor, and $T^{\rm matter}$ is the energy-momentum tensor associated with non-gravitational and non-electromagnetic matter fields.

The electromagnetic energy-momentum tensor $T^{\rm EM}$ is given by
\begin{equation*}
T_{ab}^{\rm EM}=\frac{1}{4\pi}\Big(F_{ac}{F_b}^{c}-\frac{1}{4}F_{cd}F^{cd}h_{ab}\Big),
\end{equation*}
where $F$ is the electromagnetic 2-form, which is also referred to as the \textit{Faraday tensor}.

Let $u$ be the future-directed timelike unit normal vector field along $M$. As is standard, by the Gauss-Codazzi equations,
\begin{align*}
\mu &:= G(u,u) = \frac{1}{2}(R - |K|^2 + \tau^2), \\
J &:= G(u, \cdot) = \div(K - \tau g),
\end{align*}
where $R$ is the scalar curvature of $(M, g)$ and $\tau = \tr K$ is the mean curvature of $M$ in $(V, h)$ with respect to $u$.

The \textit{electric} and \textit{magnetic vector fields} $E$ and $B$ on $M$ are defined in such a way that
\begin{align*}
E_a &= F_{ab} u^b, \\
B_a &= \frac{1}{2} \epsilon_{abc} F^{bc},
\end{align*}
where $\epsilon_{abc}$ is the induced volume form associated with the metric $g$. Specifically, if~$\hat{\epsilon}$ denotes the volume form of the spacetime metric $h$, then $\epsilon_{abc} = u^d \hat{\epsilon}_{dabc}$. In the main results of this paper, we assume the absence of charged matter, that is, we assume that $\div E = \div B = 0$. 

We refer to $(M,g,K,E,B)$ as initial data for the Einstein-Maxwell equations.

Standard calculations give that
\begin{align*}
T^{\rm EM}(u, u) &= \frac{1}{8\pi}(|E|^2 + |B|^2), \\
T^{\rm EM}(u, v) &= -\frac{1}{4\pi} \langle E \times B, v \rangle,
\end{align*}
for any vector $v$ tangent to $M$, where $(E \times B)_a = \epsilon_{abc}E^bB^c$ defines the cross product of $E$ and $B$, which is known in the literature as the \textit{Poynting vector}.

Now assume that $T^{\rm matter}$ satisfies the \textit{dominant energy condition}:
\begin{equation*}
T^{\rm matter}(X, Y) \ge 0 \text{\, for all future-directed causal vectors } X, Y.
\end{equation*}
Therefore,
\begin{equation*}
G(u, u) + \Lambda h(u, u) = 8\pi\left(T^{\rm EM}(u, u) + T^{\rm matter}(u, u)\right) \ge 8\pi T^{\rm EM}(u, u),
\end{equation*}
and thus
\begin{equation*}
\mu \ge \Lambda + |E|^2 + |B|^2.
\end{equation*}
In this case, if $M$ is maximal (in particular, if $M$ is time-symmetric), then
\begin{equation}\label{eq:2.1}
\frac{1}{2}R \ge \Lambda + |E|^2 + |B|^2.
\end{equation}

More generally, when $M$ is not necessarily maximal, it holds that
\begin{align*}
G(u,u+v)+\Lambda h(u,u+v)\ge 8\pi T^{\rm EM}(u,u+v),
\end{align*}
and so
\begin{align}\label{eq:2.2}
\mu+J(v)\ge \Lambda + |E|^2 + |B|^2 - 2\langle E \times B, v \rangle,
\end{align}
for every unit vector $v$ tangent to $M$.

Inequalities~\eqref{eq:2.1} and~\eqref{eq:2.2} are commonly referred to as the \textit{charged dominant energy condition} and have been considered in numerous situations (see, e.g., \cite{AlaeeKhuriYau,BrydenKhuri,CruzLimaSousa,DainJaramilloReiris,GallowayMendes2025,Gibbons,KhuriWeinsteinYamada,Khuri,WeinsteinYamada}). 

Now let $\Sigma^2$ be a closed embedded surface in $M^3$.

In this paper, we assume that $\Sigma$ and $M$ are orientable; in particular, $\Sigma$ is two-sided. Then we fix a unit normal vector field $\nu$ along $\Sigma$; if $\Sigma$ separates $M$, by convention, we say that $\nu$ points to the \textit{outside} of $\Sigma$. 

In the sequel, we are going to present some important definitions to our purposes.

The \textit{electric} and \textit{magnetic charges} of $\Sigma$ are defined, respectively, by
\begin{align*}
\Q_{\rm E}=\frac{1}{4\pi}\int_\Sigma\langle E,\nu\rangle,\quad
\Q_{\rm M}=\frac{1}{4\pi}\int_\Sigma\langle B,\nu\rangle.
\end{align*}

The \textit{null second fundamental forms} $\chi^+$ and $\chi^-$ of $\Sigma$ in $(M,g,K)$ are defined by
\begin{align*}
\chi^+=K|_\Sigma+A,\quad\chi^-=K|_\Sigma-A,
\end{align*}
where $A$ is the second fundamental form of $\Sigma$ in $(M,g)$ with respect to $\nu$; more precisely,
\begin{align*}
A(X,Y)=g(\nabla_X\nu,Y)\quad\mbox{for}\quad X,Y\in\mathfrak{X}(\Sigma),
\end{align*}
where $\nabla$ is the Levi-Civita connection of $(M,g)$.

The \textit{null expansion scalars} or the \textit{null mean curvatures} $\theta^+$ and $\theta^-$ of $\Sigma$ in $(M,g,K)$ with respect to $\nu$ are defined by
\begin{align*}
\theta^+=\tr\chi^+=\tr_\Sigma K+H^{\Sigma},\quad\theta^-=\tr\chi^-=\tr_\Sigma K-H^{\Sigma},
\end{align*}
where $H^{\Sigma}=\tr A$ is the mean curvature of $\Sigma$ in $(M,g)$ with respect to $\nu$. Observe that $\theta^\pm=\tr\chi^\pm$.

After R.~Penrose, $\Sigma$ is said to be \textit{trapped} if both $\theta^+$ and $\theta^-$ are negative.

Restricting our attention to one side, we say that $\Sigma$ is \textit{outer trapped} if $\theta^+$ is negative and \textit{marginally outer trapped} if $\theta^+$ vanishes. In the latter case, we refer to $\Sigma$ as a \textit{marginally outer trapped surface} or a \textit{MOTS}, for short.

Assume that $\Sigma$ is a MOTS in $(M,g,K)$ with respect to a unit normal $\nu$ that is a boundary in $M$; more precisely, $\nu$ points towards a top-dimensional submanifold $M^+\subset M$ such that $\p M^+=\Sigma$. Then we say that 
$\Sigma$ is \textit{outermost} (resp.\ \textit{weakly outermost}) if there is no closed embedded surface in $M^+$ with $\theta^+\le0$ (resp.\ $\theta^+<0$) that is homologous to and different from $\Sigma$.

We say that $\Sigma$ \textit{minimizes area} in $M$ if $\Sigma$ has the least area in its homology class in $M$; \textit{id est}, $\A(\Sigma)\le \A(\Sigma')$ for every closed embedded surface $\Sigma'$ in $M$ that is homologous to $\Sigma$. Similarly, $\Sigma$ is said to be \textit{outer area-minimizing} if $\Sigma$ minimizes area in $M^+$.

An important notion that we are going to recall now is the notion of stability for MOTS introduced by L.~Andersson, M.~Mars, and W.~Simon~\cite{AnderssonMarsSimon2005,AnderssonMarsSimon2008}.

Let $\Sigma$ be a MOTS in $(M,g,K)$ with respect to $\nu$ and $t\to\Sigma_t$ be a variation of $\Sigma=\Sigma_0$ in $M$ with variation vector field $\frac{\p}{\p t}|_{t=0}=\phi\nu$, for some $\phi\in C^\infty(\Sigma)$. Denote by $\theta^\pm(t)$ the null expansion scalars of $\Sigma_t$ with respect to the unit normal $\nu_t$, where $\nu=\nu_t|_{t=0}$. It is well known that (see~\cite{AnderssonMarsSimon2008}) 
\begin{align*}
\frac{\p\theta^+}{\p t}\Big|_{t=0}=-\Delta\phi+2\langle X,\nabla\phi\rangle+(Q-|X|^2+\div X)\phi,
\end{align*}
where $\Delta$ and $\div$ are the Laplace and divergence operators of $\Sigma$ with respect to the induced metric $\langle\,,\,\rangle$, respectively; $X\in\mathfrak{X}(\Sigma)$ is the vector field that is dual to the 1-form $K(\nu,\cdot)|_\Sigma$, and 
\begin{align*}
Q=\kappa_\Sigma-(\mu+J(\nu))-\frac{1}{2}|\chi^+|^2.
\end{align*}
Here $\kappa_\Sigma$ represents the Gaussian curvature of $\Sigma$.

At this point, it is important to emphasize that, in the general case (i.e., when $\Sigma$ is not necessarily a MOTS), the first variation of $\theta^+$ is given by
\begin{align*}
\frac{\partial \theta^+}{\partial t}\Big|_{t=0} = -\Delta \phi + 2\langle X, \nabla \phi \rangle + \Big(Q - |X|^2 + \div X - \frac{1}{2} \theta^+ + \tau \theta^+ \Big)\phi.
\end{align*}

The operator
\begin{align*}
L\phi=-\Delta\phi+2\langle X,\nabla\phi\rangle+(Q-|X|^2+\div X)\phi,\quad\phi\in C^\infty(\Sigma),
\end{align*}
is referred to as the \textit{MOTS stability operator}. It can be proved that $L$ has a real eigenvalue $\lambda_1$, called the \textit{principal eigenvalue} of $L$, such that $\mbox{Re}\,\lambda\ge\lambda_1$ for any complex eigenvalue $\lambda$. Furthermore, the associated eigenfunction $\phi_1$, $L\phi_1=\lambda_1\phi_1$, is unique up to scale and can be chosen to be everywhere positive.

The principal eigenvalue $\lambda_1(\L)$ of the symmetrized operator $\L = -\Delta + Q$ is characterized by the Rayleigh formula:
\begin{align}\label{eq:2.3}
\lambda_1(\L) =\min_{u \in C^\infty(\Sigma) \setminus \{0\}} \frac{\int_\Sigma (|\nabla u|^2 + Q u^2)}{\int_\Sigma u^2}.
\end{align}
Furthermore, the eigenfunctions of $\L$ associated with $\lambda_1(\L)$ are the only functions that attain the minimum in~\eqref{eq:2.3}. 

It was proved by G.~J.~Galloway and R.~Schoen (see~\cite{Galloway,GallowaySchoen}) through direct estimates, and by L.~Andersson, M.~Mars, and W.~Simon~\cite{AnderssonMarsSimon2008} using a different method, that $\lambda_1(L) \leq \lambda_1(\L)$.

We say that $\Sigma$ is \textit{stable} if $\lambda_1(L)\ge0$; this is equivalent to saying that $L\phi\ge0$ for some positive function $\phi\in C^\infty(\Sigma)$. It is not difficult to see that, if $\Sigma$ is weakly outermost (in particular, if $\Sigma$ is outermost), then $\Sigma$ is stable.

Before concluding this section, let us recall the notion of \textit{2-convexity}. The tensor $K$ is said to be \textit{2-convex} if, at every point, the sum of its two smallest eigenvalues is non-negative. In particular, if $K$ is 2-convex, then $\tr_\Sigma K \ge 0$ along $\Sigma$. This convexity condition has been employed by the author in related contexts~\cite{Mendes2019,EichmairGallowayMendes,Mendes2022,GallowayMendes2024,deAlmeidaMendes} (see also~\cite{LimaSousaBatista}).

\section{Proofs}\label{section:proofs}

This section is devoted to the proofs of the main results of the paper, namely Theorems~\ref{thm:main1} and~\ref{thm:main2}. We begin by proving Theorem~\ref{thm:main2}. The proof of Theorem~\ref{thm:main1} follows a similar structure.

As a first step, we establish an auxiliary \textit{infinitesimal rigidity result}, which plays a crucial role in the argument.

For convenience, we define the \textit{total charge} of~$\Sigma$ as
\begin{align*}
\Q_{\rm T} = \sqrt{\Q_{\rm E}^2 + \Q_{\rm M}^2}.
\end{align*}

\begin{proposition}\label{proposition:infinitesimal}
Let $(M^3, g, K, E, B)$ be a three-dimensional initial data set for the Einstein-Maxwell equations satisfying the charged dominant energy condition
\begin{align*}
\mu + J(v) \ge \Lambda + |E|^2 + |B|^2 - 2\langle E \times B, v \rangle
\end{align*}
for every unit vector $v \in T_pM$, every point $p \in M$, and some constant $\Lambda \ge 0$. 

Let $\Sigma$ be a stable, spherical MOTS in $(M, g, K)$. Then the area and total charge of $\Sigma$ satisfy
\begin{align}\label{eq:proposition:infinitesimal}
\A \ge 4\pi \Q_{\rm T}^2.
\end{align}
Moreover, if equality holds, then the following conditions are satisfied:
\begin{enumerate}
    \item The normal components of the electric and magnetic fields along $\Sigma$ are constant, say $\langle E, \nu \rangle = a$ and $\langle B, \nu \rangle = b$.
    \item $\Sigma$ is a round two-sphere with constant Gaussian curvature $\kappa_\Sigma = a^2 + b^2$.
    \item The constants $\lambda_1(L)$, $\lambda_1(\L)$, and $\Lambda$ equal zero.
\end{enumerate}
\end{proposition}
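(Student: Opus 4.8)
The plan is to convert the MOTS stability of $\Sigma$ into a spectral statement about the symmetrized operator $\L = -\Delta + Q$ and then test the Rayleigh quotient \eqref{eq:2.3} against the constant function. Since $\Sigma$ is stable, $\lambda_1(L) \ge 0$, and the Galloway--Schoen inequality $\lambda_1(L) \le \lambda_1(\L)$ gives $\lambda_1(\L) \ge 0$. The decisive step is to plug $u \equiv 1$ into \eqref{eq:2.3}, which yields
\begin{align*}
0 \le \lambda_1(\L) \le \frac{1}{\A}\int_\Sigma Q,
\end{align*}
so in particular $\int_\Sigma Q \ge \lambda_1(\L)\,\A \ge 0$.

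Next I would expand $\int_\Sigma Q = \int_\Sigma \kappa_\Sigma - \int_\Sigma(\mu + J(\nu)) - \tfrac{1}{2}\int_\Sigma|\chi^+|^2$ and estimate each term. Gauss--Bonnet gives $\int_\Sigma \kappa_\Sigma = 4\pi$ since $\Sigma$ is a sphere. Applying the charged dominant energy condition with $v = \nu$ and splitting $E,B$ into normal and tangential parts, the pointwise algebraic inequality
\begin{align*}
|E|^2 + |B|^2 - 2\langle E \times B, \nu\rangle \ge \langle E,\nu\rangle^2 + \langle B,\nu\rangle^2,
\end{align*}
which follows from $2\langle E\times B,\nu\rangle \le |E^\top|^2 + |B^\top|^2$ for the tangential components, shows that $\mu + J(\nu) \ge \Lambda + \langle E,\nu\rangle^2 + \langle B,\nu\rangle^2$. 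A Cauchy--Schwarz estimate bounds $\int_\Sigma \langle E,\nu\rangle^2 \ge 16\pi^2\Q_{\rm E}^2/\A$, and similarly for $B$. Substituting back into $\int_\Sigma Q \ge \lambda_1(\L)\A$ produces
\begin{align*}
\Lambda\,\A + \frac{16\pi^2\,\Q_{\rm T}^2}{\A} + \frac{1}{2}\int_\Sigma|\chi^+|^2 + \lambda_1(\L)\,\A \le 4\pi,
\end{align*}
and discarding the non-negative terms gives exactly $\A \ge 4\pi\,\Q_{\rm T}^2$.

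For the rigidity statement I would trace equality through each step. Saturation forces every non-negative quantity above to vanish: $\Lambda = 0$, $\chi^+ \equiv 0$, and $\lambda_1(\L) = 0$ (hence $\lambda_1(L) = 0$ as well, since $0 \le \lambda_1(L) \le \lambda_1(\L)$), which is conclusion (3). Equality in the two Cauchy--Schwarz steps forces $\langle E,\nu\rangle$ and $\langle B,\nu\rangle$ to be constant, say $a$ and $b$, giving conclusion (1). Since $u \equiv 1$ then attains the minimum in \eqref{eq:2.3} with value $\lambda_1(\L) = 0$, the uniqueness clause for minimizers identifies $1$ as an eigenfunction, so $\L 1 = Q = 0$ pointwise. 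Combining $Q = 0$ with $\chi^+ = 0$ and the pointwise equality $\mu + J(\nu) = a^2 + b^2$ in the energy condition yields $\kappa_\Sigma = a^2 + b^2$; as $a,b$ are constant, $\Sigma$ has constant Gaussian curvature and is therefore a round sphere, which is conclusion (2).

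The step I expect to require the most care is the electromagnetic estimate controlling the Poynting term $2\langle E\times B,\nu\rangle$ by the tangential energy $|E^\top|^2 + |B^\top|^2$, together with tracking precisely which equality conditions are actually needed here: only the normal components $a,b$ enter the conclusion, while the finer tangential constraint (forcing $E^\top$ and $B^\top$ to be rotations of one another) is not required for this proposition and is instead exploited in the global rigidity arguments of the main theorems.
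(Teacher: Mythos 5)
Your proposal is correct and follows essentially the same route as the paper: stability plus $\lambda_1(L)\le\lambda_1(\mathcal{L})$, testing $u\equiv 1$ in the Rayleigh quotient, Gauss--Bonnet, the charged dominant energy condition with $v=\nu$ combined with the tangential/normal splitting to control the Poynting term, Cauchy--Schwarz for the charges, and then tracing equality through each step. Your closing remark is also accurate --- the paper likewise extracts only $\Lambda=0$ and $\mu+J(\nu)=a^2+b^2$ from the electromagnetic estimate at this stage, deferring the finer tangential information to the proofs of the main theorems.
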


\begin{proof}
Since $ \Sigma $ is stable and $ \lambda_1(L) \leq \lambda_1(\L) $, we have the following inequality for every $ u \in C^\infty(\Sigma)$:
\begin{align*}
0 \leq \lambda_1(\L) \int_\Sigma u^2 \leq \int_\Sigma(|\nabla u|^2 + Q u^2).
\end{align*}

Taking $ u \equiv 1 $, we obtain
\begin{align}\label{eq:aux3.2}
0 \leq \lambda_1(\L) \A \leq \int_\Sigma Q &= \int_\Sigma\Big(\kappa_\Sigma - (\mu+J(\nu)) - \frac{1}{2} |\chi^+|^2\Big) \\
&\leq 4\pi - \int_\Sigma(\mu+J(\nu)),\nonumber
\end{align}
where we have used the Gauss-Bonnet theorem.

Now observe that
\begin{align}
\mu+J(\nu) &\geq \Lambda+|E|^2+|B|^2-2\langle E\times B,\nu\rangle\label{eq:aux3.3}\\
&\geq |E^\top|^2+|B^\top|^2-2\langle E^\top\times B^\top,\nu\rangle+\langle E,\nu\rangle^2+\langle B,\nu\rangle^2,\nonumber
\end{align}
where $E^\top$ and $B^\top$ are the tangent components to $\Sigma$:  
\begin{align*}
E^\top=E-\langle E,\nu\rangle\nu, \quad B^\top=B-\langle B,\nu\rangle\nu.
\end{align*}

On the other hand, it is not difficult to see that
\begin{align}\label{eq:aux3.4}
|E^\top|^2+|B^\top|^2-2\langle E^\top\times B^\top,\nu\rangle\ge(|E^\top|-|B^\top|)^2\ge0.
\end{align}

Using these estimates, we conclude that
\begin{align*}
0 \leq 4\pi - \int_\Sigma(\langle E, \nu \rangle^2 + \langle B, \nu \rangle^2).
\end{align*}

Applying the Cauchy-Schwarz inequality, we obtain
\begin{align}\label{eq:aux3.5}
(4\pi \Q_{\rm E})^2 = \left( \int_\Sigma \langle E, \nu \rangle \right)^2 \leq \A \int_\Sigma \langle E, \nu \rangle^2.
\end{align}
Similarly,
\begin{align}\label{eq:aux3.6}
(4\pi \Q_{\rm M})^2 \leq \A \int_\Sigma \langle B, \nu \rangle^2.
\end{align}

Therefore,
\begin{align*}
0 \leq \A - 4\pi(\Q_{\rm E}^2 + \Q_{\rm M}^2) = \A - 4\pi \Q_{\rm T}(\Sigma)^2, 
\end{align*}
proving the desired inequality.

If equality in~\eqref{eq:proposition:infinitesimal} holds, then all inequalities above must also be equalities. In particular:
\begin{itemize}
\item Second equality in~\eqref{eq:aux3.5} implies that $\langle E, \nu \rangle$ is constant, say $\langle E, \nu \rangle = a$. Similarly, equality in~\eqref{eq:aux3.6} gives that $\langle B, \nu \rangle = b$ is constant.
\item Equalities in~\eqref{eq:aux3.3} and \eqref{eq:aux3.4} imply \begin{align*}
\Lambda=0,\quad\mu+J(\nu)=\langle E,\nu\rangle^2+\langle B,\nu\rangle^2=a^2+b^2.
\end{align*}

\item Equalities in~\eqref{eq:aux3.2} furnish that $\lambda_1(\L) = 0$, $\chi^+ = 0$, and $u \equiv 1$ is an eigenfunction of $\L$ associated with $\lambda_1(\L)$. Therefore,
\begin{align*}
0 = Q = \kappa_\Sigma - (\mu+J(\nu)),
\end{align*}
and thus
\begin{align*}
\quad \kappa_\Sigma = \mu+J(\nu) = a^2 + b^2.
\end{align*}
\end{itemize}

Finally, since $0 \le \lambda_1(L) \leq \lambda_1(\L) = 0$, we conclude that $\lambda_1(L) = 0$.
\end{proof}

It is worth noting that Proposition~\ref{proposition:infinitesimal} is a quasi-local statement, in the sense that it depends only on the intrinsic and extrinsic geometric data on~$\Sigma$, not on the behavior of the initial data set in a neighborhood of the surface.

\begin{proof}[Proof of Theorem~\ref{thm:main2}]
Since $\Sigma$ is weakly outermost and, in particular, stable, it follows from the infinitesimal rigidity (Proposition~\ref{proposition:infinitesimal}) that
\begin{align*}
\A \ge 4\pi \Q_{\rm T}^2.
\end{align*}
Furthermore, if equality holds, then $\lambda_1(L) = 0$ (and $\Lambda = 0$). Thus, an outer neighborhood $U \cong [0, \delta) \times \Sigma$ of $\Sigma$ in $M$ is foliated by constant null mean curvature surfaces $\Sigma_t \cong \{t\} \times \Sigma$ (see \cite[Lemma~2.3]{Galloway2018}), with $\Sigma_0 = \Sigma$ and
\begin{align*}
    g = \phi^2 dt^2 + g_t \quad \text{on} \quad U,
\end{align*}
where $g_t$ is the induced metric on $\Sigma_t$. 

On $\Sigma_t$, we recall that
\begin{align*}
    \frac{d\theta}{dt} = -\Delta \phi + 2\langle X, \nabla \phi \rangle + \Big(Q - |X|^2 + \div X - \frac{1}{2} \theta^2 + \tau \theta \Big) \phi,
\end{align*}
where $\theta = \theta(t)$ is the null mean curvature of $\Sigma_t$ with respect to $\nu_t = \phi^{-1} \partial_t$.

Dividing both sides of last equation by $\phi$ and integrating over $\Sigma_t$, we obtain
\begin{align}\label{eq:aux3.7}
\theta'\int_{\Sigma_t}\frac{1}{\phi}-\theta\int_{\Sigma_t}\tau&=\int_{\Sigma_t}\Big(\div Y-|Y|^2+Q-\frac{1}{2}\theta^2\Big)\le\int_{\Sigma_t}Q\\
&=\int_{\Sigma_t}\Big(\kappa_{\Sigma_t}-(\mu+J(\nu_t))-\frac{1}{2}|\chi^+_t|^2\Big)\nonumber\\
&\le4\pi-\int_{\Sigma_t}(\mu+J(\nu_t)),\nonumber
\end{align}
where $Y=X-\nabla\ln\phi$.

Using the proof strategy from Proposition~\ref{proposition:infinitesimal}, we have
\begin{align}\label{eq:aux3.8}
\mu+J(\nu_t)\ge|E|^2+|B|^2-2\langle E\times B,\nu_t\rangle\ge\langle E,\nu_t\rangle^2+\langle B,\nu_t\rangle^2.
\end{align}
Thus,
\begin{align}\label{eq:aux3.9}
\theta'\int_{\Sigma_t}\frac{1}{\phi}-\theta\int_{\Sigma_t}\tau&\le4\pi-\int_{\Sigma_t}(\langle E,\nu_t\rangle^2+\langle B,\nu_t\rangle^2)\\
&\le4\pi-\frac{\displaystyle\left(\int_{\Sigma_t}\langle E,\nu_t\rangle\right)^2+\left(\int_{\Sigma_t}\langle B,\nu_t\rangle\right)^2}{\A(t)}\nonumber\\
&=4\pi\left(1-\frac{4\pi \Q_{\rm T}(t)^2}{\A(t)}\right),\nonumber
\end{align}
where $\A(t)$ and $\Q_{\rm T}(t)$ are the area and total charge of $\Sigma_t$, respectively.

Because we are assuming $4\pi \Q_{\rm T}(0)^2 = \A(0)$ and $\div E = \div B = 0$ (implying $\Q_{\rm T}(t) = \Q_{\rm T}(0)$), we find
\begin{align}\label{eq:aux3.10}
\theta'\left(\frac{\A(t)}{4\pi}\int_{\Sigma_t}\frac{1}{\phi}\right)-\theta\left(\frac{\A(t)}{4\pi}\int_{\Sigma_t}\tau\right)&\le \A(t)-4\pi \Q_{\rm T}(t)^2\\
&=\A(t)-\A(0)\nonumber\\
&=\int_0^t\left(\int_{\Sigma_s}H^{\Sigma_s}\phi\right)ds,\nonumber
\end{align}
where we have used the fundamental theorem of calculus along with the first\linebreak variation of area formula.

Since, by hypothesis, $K$ is 2-convex, it follows that
\begin{align}\label{eq:aux3.11}
    H^{\Sigma_s} \leq \tr_{\Sigma_s} K + H^{\Sigma_s} = \theta(s).
\end{align}
Therefore,
\begin{align*}
    \theta'(t) \left(\frac{\A(t)}{4\pi} \int_{\Sigma_t} \frac{1}{\phi}\right) - \theta(t) \left(\frac{\A(t)}{4\pi} \int_{\Sigma_t} \tau\right) \leq \int_0^t \theta(s) \left( \int_{\Sigma_s} \phi \right) ds.
\end{align*}

Using Lemma~3.2 in \cite{Mendes2019}, we conclude that $\theta(t) \leq 0$. Because $\Sigma$ is weakly outermost, this implies $\theta(t) = 0$, forcing all inequalities above to be equalities.

Thus:
\begin{itemize}
\item Equalities in \eqref{eq:aux3.11} give that $\tr_{\Sigma_t}K=H^{\Sigma_t}=0$ along $\Sigma_t$. In particular,
\begin{align*}
\theta^-(t)=\tr_{\Sigma_t}K-H^{\Sigma_t}=0
\end{align*}
for every $t\in[0,\delta)$.
\item Equalities in \eqref{eq:aux3.10} imply that all surfaces $\Sigma_t$ have the same area as $\Sigma$: $\A(t)=\A(0)$.
\item Equalities in \eqref{eq:aux3.8} hold, that is,
\begin{align}\label{eq:aux3.12}
\mu+J(\nu_t)=|E|^2+|B|^2-2\langle E\times B,\nu_t\rangle=\langle E,\nu_t\rangle^2+\langle B,\nu_t\rangle^2
\end{align}
along $\Sigma_t$ for every $t\in[0,\delta)$.
\item Finally, equalities in \eqref{eq:aux3.7} imply $Y=X-\nabla\ln\phi=0$ and $\chi_t^+=0$ along $\Sigma_t$.
\end{itemize}

Now, taking the first variation of $\theta^-(t)=0$, with $\phi^-=-\phi$ instead of $\phi$, we obtain
\begin{align}\label{eq:aux3.13}
0=\frac{d\theta^-}{dt}=-\Delta\phi^-+2\langle X^-,\nabla\phi^-\rangle+(Q^--|X^-|^2+\div X^-)\phi^-,
\end{align}
where $X^-=(K(-\nu_t,\cdot)|_{\Sigma_t})^\sharp=-X=-\nabla\ln\phi$, and
\begin{align*}
Q^-=\kappa_{\Sigma_t}-(\mu-J(\nu_t))-\frac{1}{2}|\chi^-_t|^2.
\end{align*}

Thus, dividing both sides of \eqref{eq:aux3.13} by $\phi^-=-\phi$ and integrating over $\Sigma_t$, we get
\begin{align}\label{eq:aux3.14}
0=\int_{\Sigma_t}(\div Y^--|Y^-|^2+Q^-)\le\int_{\Sigma_t}Q^-\le4\pi-\int_{\Sigma_t}(\mu-J(\nu_t)),
\end{align}
where $Y^-=X^--\nabla\ln\phi=-2\nabla\ln\phi$. Above we have used the Gauss-Bonnet theorem.

Observe that
\begin{align}\label{eq:aux3.15}
\mu-J(\nu_t)\ge|E|^2+|B|^2+2\langle E\times B,\nu_t\rangle\ge\langle E,\nu_t\rangle^2+\langle B,\nu_t\rangle^2.
\end{align}
Therefore,
\begin{align*}
0&\le4\pi-\int_{\Sigma_t}(\mu-J(\nu_t))\le4\pi-\int_{\Sigma_t}(\langle E,\nu_t\rangle^2+\langle B,\nu_t\rangle^2)\\
&\le4\pi\left(1-\frac{4\pi \Q_{\rm T}(t)^2}{\A(t)}\right)=4\pi\left(1-\frac{\A(0)}{\A(t)}\right)=0,
\end{align*}
thus all inequalities above must be equalities. 

Then:
\begin{itemize}
\item From \eqref{eq:aux3.12} and equalities in \eqref{eq:aux3.15}, we have
\begin{align*}
|E|^2+|B|^2-2\langle E\times B,\nu_t\rangle&=\langle E,\nu_t\rangle^2+\langle B,\nu_t\rangle^2\\
&=|E|^2+|B|^2+2\langle E\times B,\nu_t\rangle.
\end{align*}
Therefore, $\langle E\times B,\nu_t\rangle=0$ and $|E|^2+|B|^2=\langle E,\nu_t\rangle^2+\langle B,\nu_t\rangle^2$. Thus, $E$ and $B$ are parallel to $\nu_t$, say $E=a\nu_t$ and $B=b\nu_t$. Furthermore, from the second equality in \eqref{eq:aux3.9}, we obtain that $a=a(t)$ and $b=b(t)$ are constant on $\Sigma_t$.
\item Equalities in \eqref{eq:aux3.14} imply $Y^-=-2\nabla\ln\phi=0$ and $\chi^-_t=0$ along $\Sigma_t$. Therefore, $\phi=\phi(t)$ is constant on $\Sigma_t$ for each $t\in[0,\delta)$. In this case, after a change of variable if necessary, we may assume that $\phi\equiv1$. Moreover, $\chi^+_t=K|_{\Sigma_t}+A_t=0$ and $\chi^-_t=K|_{\Sigma_t}-A_t=0$ imply that $K|_{\Sigma_t}=0$ and $\Sigma_t$ is totally geodesic in $(M,g)$. This gives that
\begin{align*}
g=dt^2+g_0\quad\mbox{on}\quad U\cong[0,\delta)\times\Sigma,
\end{align*}
where $g_0$ is the induced metric on $\Sigma$.
\item Because $\div E=\div B=0$, we can see that $a$ e $b$ are constant.
\item Looking at \eqref{eq:aux3.12} and equalities in \eqref{eq:aux3.15} again, we get
\begin{align*}
\mu+J(\nu_t)=a^2+b^2=\mu-J(\nu_t).
\end{align*}
Therefore,
\begin{align*}
\mu=a^2+b^2,\quad J(\nu_t)=0.
\end{align*}
\item It follows from \eqref{eq:aux3.13} that
\begin{align*}
0=Q^-=\kappa_{\Sigma_t}-\mu\quad\therefore\quad \kappa_{\Sigma_t}=\mu=a^2+b^2.
\end{align*}  
\item Given a unit vector $v$ tangent to $M$, we are assuming that
\begin{align*}
\mu+J(v)\ge|E|^2+|B|^2-2\langle E\times B,v\rangle=a^2+b^2=\mu.
\end{align*}
Therefore, $J(v)\ge0$ for every $v$, that is, $J=0$.
\item Finally, $K|_{\Sigma_t}=0$, $K(\nu_t,\cdot)|_{\Sigma_t}=X^\flat=0$, and $J=\div(K-\tau g)=0$ give that $K=fdt^2$ on $U\cong[0,\delta)\times\Sigma$, where $f$ depends only on $t\in[0,\delta)$.
\end{itemize}

This concludes the proof of Theorem~\ref{thm:main2}.
\end{proof}

We now proceed with the proof of Theorem~\ref{thm:main1}. To this end, we first present the following auxiliary result:

\begin{proposition}\label{proposition:infinitesimal.Riemannian}
Let $(M^3, g)$ be a three-dimensional Riemannian manifold whose scalar curvature $R$ satisfies
\begin{align*}
\frac{1}{2}R \geq \Lambda + |E|^2 + |B|^2,
\end{align*}
where $\Lambda$ is a non-negative constant, and $E$ and $B$ are vector fields on $M$.

If $\Sigma$ is a stable, minimal two-sphere embedded in $(M, g)$, then the area and total charge of $\Sigma$ satisfy
\begin{align}\label{eq:Gibbons.area-charge2}
\A \ge 4\pi \Q_{\rm T}^2.
\end{align}
Moreover, if equality holds, then the following conditions are satisfied:
\begin{enumerate}
    \item The electric and magnetic fields are parallel to $\nu$; more precisely,
\begin{align*}
E = a\nu, \quad B = b\nu,
\end{align*}
for some constants $a$ and $b$.
    \item $\Sigma$ is a round two-sphere with constant Gaussian curvature $\kappa_\Sigma = a^2 + b^2$.
    \item $\Sigma$ is totally geodesic, $\Lambda=0$, and $R=2(a^2+b^2)$ on $\Sigma$.
\end{enumerate}
\end{proposition}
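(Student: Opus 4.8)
The plan is to recognize this as the time-symmetric specialization of Proposition~\ref{proposition:infinitesimal} and to transport that argument almost verbatim. First I would record what setting $K=0$ does to the data: one gets $\mu=\tfrac12 R$, $J=0$, the drift field $X$ dual to $K(\nu,\cdot)|_\Sigma$ vanishes, and $\chi^+=A$, so that a MOTS is simply a minimal surface and the MOTS stability operator collapses to the classical Jacobi operator $L\phi=-\Delta\phi+Q\phi$ with $Q=\kappa_\Sigma-\tfrac12 R-\tfrac12|A|^2$. A point worth verifying at the outset is that, via the three-dimensional Gauss equation, this $Q$ agrees with the usual potential $-(\Ric(\nu,\nu)+|A|^2)$, so that $\lambda_1(L)\ge 0$ is exactly the standard stability of $\Sigma$ as a minimal surface and the hypothesis is consistent with the MOTS framework.

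With this dictionary in place, the inequality follows the same chain as before. Stability gives $0\le\lambda_1(L)\le\lambda_1(\L)$, and feeding the test function $u\equiv 1$ into the Rayleigh quotient~\eqref{eq:2.3} together with the Gauss-Bonnet theorem on the sphere $\Sigma$ yields
\begin{align*}
0\le\lambda_1(\L)\A\le\int_\Sigma Q=\int_\Sigma\Big(\kappa_\Sigma-\tfrac12 R-\tfrac12|A|^2\Big)\le 4\pi-\int_\Sigma\tfrac12 R.
\end{align*}
The energy hypothesis then supplies $\tfrac12 R\ge\Lambda+|E|^2+|B|^2\ge\langle E,\nu\rangle^2+\langle B,\nu\rangle^2$, and two applications of Cauchy-Schwarz, exactly as in~\eqref{eq:aux3.5} and~\eqref{eq:aux3.6}, convert $\int_\Sigma\langle E,\nu\rangle^2$ and $\int_\Sigma\langle B,\nu\rangle^2$ into $(4\pi\Q_{\rm E})^2/\A$ and $(4\pi\Q_{\rm M})^2/\A$. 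Combining these produces $\A\ge 4\pi\Q_{\rm T}^2$.

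Finally I would analyze equality by forcing every inequality in the chain to be an equality and reading off the consequences. Equality in Cauchy-Schwarz makes $\langle E,\nu\rangle\equiv a$ and $\langle B,\nu\rangle\equiv b$ constant; equality in the energy chain forces $\Lambda=0$ together with $|E^\top|=|B^\top|=0$, so that $E=a\nu$ and $B=b\nu$; and equality in the displayed chain forces $\int_\Sigma|A|^2=0$ (hence $\Sigma$ totally geodesic), $\lambda_1(\L)=0$, and that $u\equiv 1$ attains the minimum, whence $Q\equiv 0$ pointwise. Since $A=0$, the relation $Q=0$ reads $\kappa_\Sigma=\tfrac12 R=|E|^2+|B|^2=a^2+b^2$, which is constant, so $\Sigma$ is round with the stated Gaussian curvature and $R=2(a^2+b^2)$ on $\Sigma$, while $0\le\lambda_1(L)\le\lambda_1(\L)=0$ gives $\lambda_1(L)=0$. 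I do not expect a genuine obstacle here: the proposition is essentially a corollary of the structure already established, and the only step demanding care is the equality bookkeeping, specifically keeping the two distinct roles of the energy inequality straight — bounding $\tfrac12 R$ from below, and then discarding $\Lambda$ and the tangential parts — so that one cleanly isolates $\Lambda=0$ and the normality of $E$ and $B$.
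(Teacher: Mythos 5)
Your proposal is correct and follows essentially the same route as the paper: stability tested against $u\equiv 1$, Gauss--Bonnet, the pointwise bound $\tfrac12 R\ge \Lambda+|E|^2+|B|^2\ge\langle E,\nu\rangle^2+\langle B,\nu\rangle^2$, Cauchy--Schwarz, and then equality bookkeeping in each step. The only cosmetic difference is that you obtain the stability inequality by specializing the MOTS stability operator to $K=0$ (correctly checking via the Gauss equation that $Q$ becomes the classical Jacobi potential), whereas the paper writes down the classical second-variation inequality for stable minimal surfaces directly; the resulting chain of estimates and the rigidity conclusions are identical.
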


It is worth noting that inequality~\eqref{eq:Gibbons.area-charge2} was originally derived by Gibbons~\cite{Gibbons} (see also~\cite[Theorem~4.4]{DainJaramilloReiris}). Our contribution lies in establishing the infinitesimal rigidity statement.

\begin{proof}
Since $\Sigma$ is a stable minimal surface, the stability inequality says that
\begin{align*}
\frac{1}{2}\int_\Sigma(R + |A|^2)u^2 \le \int_\Sigma |\nabla u|^2 + \int_\Sigma \kappa_\Sigma u^2
\end{align*}
for every $ u \in C^\infty(\Sigma) $. Taking $ u \equiv 1 $, we obtain
\begin{align}\label{eq:aux3.17}
\frac{1}{2}\int_\Sigma R \le 4\pi,
\end{align}
where we have used the Gauss-Bonnet theorem.

Next, using the estimate
\begin{align}\label{eq:aux3.18}
\frac{1}{2}R \ge \Lambda + |E|^2 + |B|^2 \ge \langle E, \nu \rangle^2 + \langle B, \nu \rangle^2,
\end{align}
we conclude that
\begin{align*}
\int_\Sigma( \langle E, \nu \rangle^2 + \langle B, \nu \rangle^2 ) \le 4\pi.
\end{align*}

Finally, applying the Cauchy-Schwarz inequality yields
\begin{align}\label{eq:aux3.19}
(4\pi \Q_{\rm T})^2 
= \left( \int_\Sigma \langle E, \nu \rangle \right)^2 
+ \left( \int_\Sigma \langle B, \nu \rangle \right)^2 
\le \A \int_\Sigma( \langle E, \nu \rangle^2 + \langle B, \nu \rangle^2) 
\le 4\pi \A,
\end{align}
which proves inequality~\eqref{eq:Gibbons.area-charge2}.

Now suppose that equality holds in~\eqref{eq:Gibbons.area-charge2}. Then equality must also hold in each of the steps above.

Equality in~\eqref{eq:aux3.17} implies that $ \Sigma $ is totally geodesic and that $ u_0 \equiv 1 $ is a Jacobi function on $\Sigma$:
\begin{align*}
\Delta u_0 + \frac{1}{2}(R - 2\kappa_\Sigma + |A|^2)u_0 = 0 \quad \text{on} \quad \Sigma.
\end{align*}
Therefore, $ R = 2\kappa_\Sigma $.

Equality in~\eqref{eq:aux3.18} implies $ \Lambda = 0 $, and that $ E $ and $ B $ are parallel to $ \nu $ along $\Sigma$, i.e.\ $ E = a\nu $ and $ B = b\nu $ for some functions $ a, b $.

Finally, second equality in~\eqref{eq:aux3.19} implies that $ \langle E, \nu \rangle $ and $ \langle B, \nu \rangle $ are constant, hence $ a $ and $ b $ are constant functions.
\end{proof}

\begin{proof}[Proof of Theorem~\ref{thm:main1}]

Since $\Sigma$ is area-minimizing (in particular, stable minimal), it follows from Proposition~\ref{proposition:infinitesimal.Riemannian} that $\A \ge 4\pi \Q_{\rm T}^2$. Furthermore, if equality holds, then the Jacobi operator of $\Sigma$ reduces to $-\Delta$.

Therefore, as in the proof of Theorem~\ref{thm:main2}, by a classical argument in the literature (see, e.g., \cite{AnderssonCaiGalloway,BrayBrendleNeves,MicallefMoraru,Nunes}), a neighborhood $U \cong (-\delta,\delta) \times \Sigma$ of $\Sigma$ in $M$ can be foliated by constant mean curvature surfaces $\Sigma_t \cong \{t\} \times \Sigma$, with $\Sigma_0 = \Sigma$ and
\begin{align*}
g = \phi^2 dt^2 + g_t \quad \text{on} \quad U.
\end{align*}

The first variation of $H(t) := H^{\Sigma_t}$ gives
\begin{align*}
H' = -\Delta \phi - \frac{1}{2}(R - 2\kappa_{\Sigma_t} + |A_t|^2 + H^2)\phi.
\end{align*}
Thus,
\begin{align*}
H' \int_{\Sigma_t} \frac{1}{\phi} &= -\int_{\Sigma_t} \frac{\Delta \phi}{\phi} - \frac{1}{2} \int_{\Sigma_t} (R + |A_t|^2 + H^2) + \int_{\Sigma_t} \kappa_{\Sigma_t} \\
&\le -\int_{\Sigma_t} \frac{|\nabla \phi|^2}{\phi^2} - \frac{1}{2} \int_{\Sigma_t} R + 4\pi \\
&\le -\frac{1}{2} \int_{\Sigma_t} R + 4\pi.
\end{align*}

Using the estimates
\begin{align*}
\frac{1}{2} R \ge |E|^2 + |B|^2 \ge \langle E, \nu_t \rangle^2 + \langle B, \nu_t \rangle^2,
\end{align*}
and applying the Cauchy-Schwarz inequality, we obtain
\begin{align*}
H' \int_{\Sigma_t} \frac{1}{\phi} \le 4\pi \left(1 - \frac{4\pi \Q_{\rm T}(t)^2}{\A(t)}\right).
\end{align*}

On the other hand, $4\pi \Q_{\rm T}(t)^2 = 4\pi \Q_{\rm T}(0)^2 = \A(0)$, since $\div E = \div B = 0$. Therefore,
\begin{align*}
H'(t) \int_{\Sigma_t} \frac{1}{\phi} \le \frac{4\pi}{\A(t)} (\A(t) - \A(0)) = \frac{4\pi}{\A(t)} \int_0^t H(s) \left( \int_{\Sigma_s} \phi \right) ds,
\end{align*}
that is,
\begin{align*}
H'(t) \eta(t) \le \int_0^t H(s) \xi(s) ds, \quad \eta(t) := \frac{\A(t)}{4\pi} \int_{\Sigma_t} \frac{1}{\phi}, \quad \xi(t) := \int_{\Sigma_t} \phi,
\end{align*}
where we have used the fundamental theorem of calculus together with the first variation of area formula. This holds for every $t \in (-\delta, \delta)$.

It follows directly from Lemma~3.2 in~\cite{Mendes2019} that $H(t) \le 0$ for every $t \in [0, \delta)$. Similarly, by applying the same strategy as in the proof of Lemma~3.2 in~\cite{Mendes2019} for $\rho(t) = 0$, it is not difficult to show that $H(t) \ge 0$ for every $t \in (-\delta, 0]$. Therefore,
\begin{align}\label{eq:aux3.20}
\A'(t)=\int_{\Sigma_t}H(t)\phi
\left\{
\begin{array}{ll}
\le 0&\mbox{for}\quad t\in[0,\delta),\\
\ge 0&\mbox{for}\quad t\in(-\delta,0].
\end{array}
\right. 
\end{align}
In any case, $\A(t) \le \A(0)$ for every $t \in (-\delta, \delta)$. This implies that $\A(t) = \A(0)$ for all $t \in (-\delta, \delta)$, since $\Sigma_0 = \Sigma$ is area-minimizing. Using this in~\eqref{eq:aux3.20}, we obtain that $H(t) = 0$ for every $t \in (-\delta, \delta)$. Therefore, all inequalities above must be equalities.

Thus, each $\Sigma_t$ is an area-minimizing surface satisfying $\A(t) = 4\pi \Q_{\rm T}(t)^2$. Then, by Proposition~\ref{proposition:infinitesimal.Riemannian},
\begin{itemize}
\item $E = a \nu_t$ and $B = b \nu_t$, where $a = a(t)$ and $b = b(t)$ are constant on $\Sigma_t$;
\item Each $\Sigma_t$ is a totally geodesic round two-sphere with constant Gaussian curvature $\kappa_{\Sigma_t} = a^2 + b^2$;
\item $R = 2(a^2 + b^2)$ on $\Sigma_t$ for each $t \in (-\delta, \delta)$.
\end{itemize}

Finally, since $\div E = \div B = 0$, we conclude that $a$ and $b$ are constant functions. Standard calculations guarantee Theorem~\ref{thm:main1}.
\end{proof}

\section{The model}\label{section:model}

Let $q > 0$ and consider the dyonic Bertotti-Robinson spacetime $(V^4, h)$ defined by  
\begin{align*}
V^4 = \mathbb{R} \times \mathbb{R} \times S^2, \quad h = q^2(-\cosh^2r\, dt^2 + dr^2 + d\theta^2 + \sin^2\theta\, d\phi^2).
\end{align*}

Note that $(V, h)$ is the direct product of a two-dimensional anti-de Sitter space of curvature $-1/q^2$ and a round two-sphere of curvature $1/q^2$. Consequently, one can verify that
\begin{align*}
\Ric_h = \frac{1}{q^2} \operatorname{diag}(-h_{tt}, -h_{rr}, h_{\theta\theta}, h_{\phi\phi}).
\end{align*}
In particular, the scalar curvature of $(V, h)$ vanishes.

Now let $q_e$ and $q_m$ be constants and define the Faraday tensor $F$ by  
\begin{align*}
F = -q_e \cosh r\, dt \wedge dr + q_m \sin\theta\, d\theta \wedge d\phi.
\end{align*}
A direct computation shows that the associated electromagnetic energy-momentum tensor $T^{\mathrm{EM}}$ takes the form  
\begin{align*}
T^{\mathrm{EM}} = \frac{1}{8\pi} \frac{q_e^2 + q_m^2}{q^4} \operatorname{diag}(-h_{tt}, -h_{rr}, h_{\theta\theta}, h_{\phi\phi}).
\end{align*}
Therefore, by choosing $q_e$ and $q_m$ such that $q^2 = q_e^2 + q_m^2$, the spacetime $(V, h)$ satisfies the Einstein equations with zero cosmological constant:
\begin{align*}
\Ric_h = 8\pi T^{\mathrm{EM}}.
\end{align*}

Observe that each $t$-slice $M = \{t\} \times \mathbb{R} \times S^2$ is time-symmetric and isometric to the Riemannian product of a line with a round two-sphere of Gaussian curvature $1/q^2$. Furthermore, the electric and magnetic vector fields on $M$ are given by  
\begin{align*}
E = \frac{q_e}{q^2} \nu, \quad B = \frac{q_m}{q^2} \nu, \quad \nu := \frac{1}{q} \partial_r.
\end{align*}

Finally, consider the 2-sphere $\Sigma = \{t\} \times \{r\} \times S^2$. The electric charge enclosed by $\Sigma$ is  
\begin{align*}
\mathcal{Q}_{\mathrm{E}} = \frac{1}{4\pi}\int_\Sigma \langle E, \nu \rangle = \frac{1}{4\pi} \frac{q_e}{q^2} \mathcal{A} = q_e.
\end{align*}
Similarly, the magnetic charge is  
\begin{align*}
\mathcal{Q}_{\mathrm{M}} = q_m.
\end{align*}

Clearly, $\Sigma$ and $M$ satisfy all the assumptions of Theorems~\ref{thm:main1} and~\ref{thm:main2} with $\A=4\pi\Q_{\rm T}^2$.

For a detailed discussion of the Bertotti-Robinson spacetime with $q_m=0$, as well as other notable spacetimes in dimension $D \ge 4$ with vanishing magnetic field, see~\cite{CardosoDiasLemos}.

\section*{Acknowledgments}

The author sincerely thanks Greg Galloway for his kind interest in this work. He also gratefully acknowledges partial support from the Conselho Nacional de Desenvolvimento Científico e Tecnológico (CNPq, Grant 309867/2023-1) and the Coordenação de Aperfeiçoamento de Pessoal de Nível Superior (CAPES/MATH-AMSUD 88887.985521/2024-00), Brazil.

%\section*{Conflict of interest}

%We know of no conflicts of interest associated with this publication, and there has been no significant financial support for this work that could have influenced its outcome.

%\section*{Data availability}

%No data were generated or analyzed in this study.

\bibliographystyle{plain}
\bibliography{bibliography.bib}

\begin{thebibliography}{10}

\bibitem{AlaeeKhuriYau}
Aghil Alaee, Marcus Khuri, and Shing-Tung Yau.
\newblock Geometric inequalities for quasi-local masses.
\newblock {\em Commun. Math. Phys.}, 378(1):467--505, 2020.

\bibitem{AnderssonCaiGalloway}
Lars Andersson, Mingliang Cai, and Gregory~J. Galloway.
\newblock Rigidity and positivity of mass for asymptotically hyperbolic
  manifolds.
\newblock {\em Ann. Henri Poincar{\'e}}, 9(1):1--33, 2008.

\bibitem{AnderssonMarsSimon2005}
Lars Andersson, Marc Mars, and Walter Simon.
\newblock Local existence of dynamical and trapping horizons.
\newblock {\em Phys. Rev. Lett.}, 95:111102, Sep 2005.

\bibitem{AnderssonMarsSimon2008}
Lars Andersson, Marc Mars, and Walter Simon.
\newblock Stability of marginally outer trapped surfaces and existence of
  marginally outer trapped tubes.
\newblock {\em Adv. Theor. Math. Phys.}, 12(4):853--888, 2008.

\bibitem{BrayBrendleNeves}
Hubert Bray, Simon Brendle, and Andre Neves.
\newblock Rigidity of area-minimizing two-spheres in three-manifolds.
\newblock {\em Commun. Anal. Geom.}, 18(4):821--830, 2010.

\bibitem{BrydenKhuri}
Edward~T. Bryden and Marcus~A. Khuri.
\newblock The area-angular momentum-charge inequality for black holes with
  positive cosmological constant.
\newblock {\em Classical Quantum Gravity}, 34(12):24, 2017.
\newblock Id/No 125017.

\bibitem{CardosoDiasLemos}
Vitor Cardoso, \'Oscar J.~C. Dias, and Jos\'e P.~S. Lemos.
\newblock {N}ariai, {B}ertotti-{R}obinson, and anti-{N}ariai solutions in
  higher dimensions.
\newblock {\em Phys. Rev. D}, 70:024002, Jul 2004.

\bibitem{CruzLimaSousa}
Tiarlos Cruz, Vanderson Lima, and Alexandre de~Sousa.
\newblock Min-max minimal surfaces, horizons and electrostatic systems.
\newblock {\em J. Differ. Geom.}, 128(2):583--637, 2024.

\bibitem{DainJaramilloReiris}
Sergio Dain, Jos{\'e}~Luis Jaramillo, and Mart{\'{\i}}n Reiris.
\newblock Area-charge inequality for black holes.
\newblock {\em Classical Quantum Gravity}, 29(3):15, 2012.
\newblock Id/No 035013.

\bibitem{deAlmeidaMendes}
Deivid de~Almeida and Abra{\~a}o Mendes.
\newblock Rigidity results for free boundary hypersurfaces in initial data sets
  with boundary.
\newblock Preprint, {arXiv}:2502.09433 [math.{DG}] (2025), 2025.

\bibitem{EichmairGallowayMendes}
Michael Eichmair, Gregory~J. Galloway, and Abra{\~a}o Mendes.
\newblock Initial data rigidity results.
\newblock {\em Commun. Math. Phys.}, 386(1):253--268, 2021.

\bibitem{Galloway}
Gregory~J. Galloway.
\newblock Stability and rigidity of extremal surfaces in {Riemannian} geometry
  and general relativity.
\newblock In {\em Surveys in geometric analysis and relativity. Dedicated to
  Richard Schoen in honor of his 60th birthday}, pages 221--239. Somerville,
  MA: International Press; Beijing: Higher Education Press, 2011.

\bibitem{Galloway2018}
Gregory~J. Galloway.
\newblock Rigidity of outermost {MOTS}: the initial data version.
\newblock {\em Gen. Relativ. Gravitation}, 50(3):7, 2018.
\newblock Id/No 32.

\bibitem{GallowayMendes2024}
Gregory~J. Galloway and Abra{\~a}o Mendes.
\newblock Some rigidity results for compact initial data sets.
\newblock {\em Trans. Am. Math. Soc.}, 377(3):1989--2007, 2024.

\bibitem{GallowayMendes2025}
Gregory~J. Galloway and Abra{\~a}o Mendes.
\newblock Some rigidity results for charged initial data sets.
\newblock {\em Nonlinear Anal., Theory Methods Appl., Ser. A, Theory Methods},
  256:9, 2025.
\newblock Id/No 113780.

\bibitem{GallowaySchoen}
Gregory~J. Galloway and Richard Schoen.
\newblock A generalization of {Hawking}'s black hole topology theorem to higher
  dimensions.
\newblock {\em Commun. Math. Phys.}, 266(2):571--576, 2006.

\bibitem{Gibbons}
G.~W. Gibbons.
\newblock Some comments on gravitational entropy and the inverse mean curvature
  flow.
\newblock {\em Classical Quantum Gravity}, 16(6):1677--1687, 1999.

\bibitem{KhuriWeinsteinYamada}
Marcus Khuri, Gilbert Weinstein, and Sumio Yamada.
\newblock Proof of the {Riemannian} {Penrose} inequality with charge for
  multiple black holes.
\newblock {\em J. Differ. Geom.}, 106(3):451--498, 2017.

\bibitem{Khuri}
Marcus~A. Khuri.
\newblock Inequalities between size and charge for bodies and the existence of
  black holes due to concentration of charge.
\newblock {\em J. Math. Phys.}, 56(11):112503, 9, 2015.

\bibitem{LimaSousaBatista}
A.~B. Lima, P.~A. Sousa, and R.~M. Batista.
\newblock Rigidity of marginally outer trapped surfaces in charged initial data
  sets.
\newblock {\em Lett. Math. Phys.}, 115(2):15, 2025.
\newblock Id/No 41.

\bibitem{Mendes2019}
Abra{\~a}o Mendes.
\newblock Rigidity of marginally outer trapped (hyper)surfaces with negative
  {{\(\sigma \)}}-constant.
\newblock {\em Trans. Am. Math. Soc.}, 372(8):5851--5868, 2019.

\bibitem{Mendes2022}
Abra{\~a}o Mendes.
\newblock Rigidity of free boundary {MOTS}.
\newblock {\em Nonlinear Anal., Theory Methods Appl., Ser. A, Theory Methods},
  220:15, 2022.
\newblock Id/No 112841.

\bibitem{MicallefMoraru}
Mario Micallef and Vlad Moraru.
\newblock Splitting of 3-manifolds and rigidity of area-minimising surfaces.
\newblock {\em Proc. Am. Math. Soc.}, 143(7):2865--2872, 2015.

\bibitem{Nunes}
Ivaldo Nunes.
\newblock Rigidity of area-minimizing hyperbolic surfaces in three-manifolds.
\newblock {\em J. Geom. Anal.}, 23(3):1290--1302, 2013.

\bibitem{WeinsteinYamada}
Gilbert Weinstein and Sumio Yamada.
\newblock On a {Penrose} inequality with charge.
\newblock {\em Commun. Math. Phys.}, 257(3):703--723, 2005.

\end{thebibliography}

\end{document}